\def\dom{\mathop{\mathrm{Dom}}\nolimits}
\def\N{\mathbb N}
\def\PT{\mathcal{P}\mathcal{T}}
\def\T{\mathcal{T}}
\def\Sym{\mathcal{S}}
\def\C{\mathcal{C}}
\def\D{\mathcal{D}}
\def\DPC{\mathcal{D}\mathcal{P}\mathcal{C}}
\def\POPI{\mathcal{P}\mathcal{O}\mathcal{P}\mathcal{I}}
\def\PORI{\mathcal{P}\mathcal{O}\mathcal{R}\mathcal{I}}
\def\OP{\mathcal{O}\mathcal{P}}
\def\OR{\mathcal{O}\mathcal{R}}
\def\POP{\mathcal{P}\mathcal{O}\mathcal{P}}
\def\POR{\mathcal{P}\mathcal{O}\mathcal{R}}
\def\I{\mathcal{I}}
\newtheorem{theorem}{Theorem}[section]
\newtheorem{proposition}[theorem]{Proposition}
\newtheorem{corollary}[theorem]{Corollary}
\newtheorem{lemma}[theorem]{Lemma}
\newenvironment{proof}{\begin{trivlist}\item[\hskip%
\labelsep{\bf Proof.}]}%
{\qed\rm\end{trivlist}}
\newcommand{\qed}{{\unskip\nobreak
\hfil\penalty50\hskip .001pt \hbox{}
          \nobreak\hfil
         \vrule height 1.2ex width 1.1ex depth -.1ex
           \parfillskip=0pt\finalhyphendemerits=0\medbreak}}
\newcommand{\lastpage}{\addresss}
\newcommand{\addresss}{\small \sf  
\noindent{\sc V\'\i tor H. Fernandes}, 
Center for Mathematics and Applications (NovaMath)  
and Department of Mathematics, FCT NOVA, 
Faculdade de Ci\^encias e Tecnologia, 
Universidade Nova de Lisboa, 
Monte da Caparica, 
2829-516 Caparica, 
Portugal; 
e-mail: vhf@fct.unl.pt. 
}
\title{Oriented transformations on a finite chain: another description}
\author{V\'\i tor H. Fernandes\footnote{This work is funded by national funds through the FCT - Funda\c c\~ao para a Ci\^encia e a Tecnologia, I.P., under the scope of the projects UIDB/00297/2020 and UIDP/00297/2020 (NovaMath - Center for Mathematics and Applications).}~
}
\begin{document}

\maketitle

\begin{abstract} 
Following the new description of an oriented full transformation on a finite chain given recently by Higgins and Vernitski in \cite{Higgins&Vernitski:2022}, 
in this short note we present a refinement of this description which is extendable to partial transformations and to injective partial transformations.
\end{abstract}

\medskip

\noindent{\small 2020 \it Mathematics subject classification: \rm 20M20.} 

\noindent{\small\it Keywords: \rm orientation-preserving transformations, orientation-reversing transformations, oriented transformations.} 

\section*{Introduction}\label{presection} 

For $n\in\N$, let $\Omega_n$ be a set with $n$ elements, e.g. $\Omega_n=\{1,2,\ldots,n\}$.  
As usual, denote by $\PT_n$ the monoid (under composition) of all 
partial transformations on $\Omega_n$, 
by $\T_n$ the submonoid of $\PT_n$ of all full transformations on $\Omega_n$, 
by $\I_n$ the \textit{symmetric inverse monoid} on $\Omega_n$, i.e. 
the inverse submonoid of $\PT_n$ of all 
partial permutations on $\Omega_n$, 
and by $\Sym_n$ the \textit{symmetric group} on $\Omega_n$, 
i.e. the subgroup of $\PT_n$ of all 
permutations on $\Omega_n$. 

Next, suppose that $\Omega_n$ is a chain, e.g. $\Omega_n=\{1<2<\cdots<n\}$. 
Let $s=(a_1,a_2,\ldots,a_t)$
be a sequence of $t$ ($t\geqslant0$) elements
from the chain $\Omega_n$. 
We say that $s$ is \textit{cyclic} 
[\textit{anti-cyclic}] if there
exists no more than one index $i\in\{1,\ldots,t\}$ such that
$a_i>a_{i+1}$ [$a_i<a_{i+1}$],
where $a_{t+1}$ denotes $a_1$.
Notice that, the sequence $s$ is cyclic
[anti-cyclic] if and only if $s$ is empty or there exists
$i\in\{0,1,\ldots,t-1\}$ such that 
$a_{i+1}\leqslant a_{i+2}\leqslant \cdots\leqslant a_t\leqslant a_1\leqslant \cdots\leqslant a_i $ 
[$a_{i+1}\geqslant a_{i+2}\geqslant \cdots\geqslant a_t\geqslant a_1\geqslant \cdots\geqslant a_i $] (the index
$i\in\{0,1,\ldots,t-1\}$ is unique unless $s$ is constant and
$t\geqslant2$). We also say that $s$ is \textit{oriented} if $s$ is cyclic or $s$ is anti-cyclic.  
Given a partial transformation $\alpha\in\PT_n$ such that
$\dom(\alpha)=\{a_1<\cdots<a_t\}$, with $t\geqslant0$, we 
say that $\alpha$ is \textit{orientation-preserving} 
[\textit{orientation-reversing}, \textit{oriented}] if the sequence of its images
$(a_1\alpha,\ldots,a_t\alpha)$ is cyclic [anti-cyclic, oriented].  
We denote by $\POP_n$ the submonoid of $\PT_n$ 
of all orientation-preserving partial
transformations and by $\POR_n$ the
submonoid of $\PT_n$ of all
oriented  partial transformations. 
Let $\OP_n$ be the submonoid of $\PT_n$ 
of all orientation-preserving full transformations and $\OR_n$ be the
submonoid of $\PT_n$ of all oriented full transformations, 
i.e. $\OP_n=\POP_n\cap\T_n$ and $\OR_n=\POR_n\cap\T_n$. 
Let us also consider the injective partial counterparts of $\OP_n$ and $\OR_n$, 
i.e. the inverse submonoids $\POPI_n=\POP_n\cap\I_n$ and $\PORI_n=\POR_n\cap\I_n$ of $\PT_n$. 

The notion of an orientation-preserving full transformation was introduced by McAlister in \cite{McAlister:1998} and, independently, by Catarino and Higgins in \cite{Catarino&Higgins:1999}. The partial and injective partial versions of this concept were first considered by the author in \cite{Fernandes:2000}. 
Since then, many articles have been published by several authors involving semigroups of oriented transformations. 
In particular, the author, with various different co-authorships, has more than a dozen published papers on these semigroups. 

Recently Higgins and Vernitski presented in \cite{Higgins&Vernitski:2022} a new description of an oriented transformation. 
From this description, in this short note, we derive yet another description of an oriented transformation which is, 
on the one hand and in a certain sense, simpler and, on the other hand, extendable to partial transformations and to injective partial transformations.

\section{The description} \label{description} 

Let us consider the following permutations of $\Omega_n$ of order $n$ and $2$, respectively: 
$$
g=\begin{pmatrix} 
1&2&\cdots&n-1&n\\
2&3&\cdots&n&1
\end{pmatrix} 
\quad\text{and}\quad  
h=\begin{pmatrix} 
1&2&\cdots&n-1&n\\
n&n-1&\cdots&2&1
\end{pmatrix}. 
$$ 
It is clear that $g$ is an orientation-preserving transformation and $h$ is an orientation-reversing transformation. 
Moreover, for $n\geqslant3$, $g$ together with $h$ generate the well-known \textit{dihedral group} $\D_{2n}$ of order $2n$ 
(considered as a subgroup of $\Sym_n$): 
$$
\D_{2n}=\{1,g,g^2,\ldots,g^{n-1}, h,hg,hg^2,\ldots,hg^{n-1}\}. 
$$
Let $\C_n$ be the \textit{cyclic group} of order $n$ generated by $g$, i.e. 
$\C_n=\{1,g,g^2,\ldots,g^{n-1}\}$.  

Notice that $\C_n$ is the group of units of $\OP_n$, $\POPI_n$ and $\POP_n$ and, on the other hand, 
for $n\geqslant3$, $\D_{2n}$ is the group of units of $\OR_n$, $\PORI_n$ and $\POR_n$ (for $n\in\{1,2\}$, their group of units is also $\C_n$). 

\smallskip 

Now, let $s=(a_1,a_2,\ldots,a_t)$
be a sequence of $t$ elements
from the chain $\Omega_n$, with $t\geqslant3$. It is easy to show that: 
\begin{itemize}
\item $s$ is cyclic [anti-cyclic] if and only if, for all $\sigma\in\C_t$,  $(a_{1\sigma},a_{2\sigma},\ldots,a_{t\sigma})$ is cyclic [anti-cyclic]; 
\item $s$ is cyclic [anti-cyclic] if and only if, for all $\sigma\in\D_{2t}\setminus\C_t$,  $(a_{1\sigma},a_{2\sigma},\ldots,a_{t\sigma})$ is anti-cyclic [cyclic]; 
\item $s$ is oriented if and only if, for all $\sigma\in\D_{2t}$,  $(a_{1\sigma},a_{2\sigma},\ldots,a_{t\sigma})$ is oriented; 
\item $s$ is cyclic [anti-cyclic, oriented] if and only if there exists $\sigma\in\C_t$ [$\sigma\in\D_{2t}\setminus\C_t$, $\sigma\in\D_{2t}$] such that $a_{1\sigma}\leqslant a_{2\sigma}\leqslant \cdots\leqslant a_{t\sigma}$. 
\end{itemize}

Next, we recall the following recent descriptions of $\OP_n$ and $\OR_n$ 
proved by Higgins and Vernitski in \cite[Theorem 3 and Theorem 7]{Higgins&Vernitski:2022} (see also \cite{Levi&Mitchell:2006}): 

\begin{proposition}\label{higver}
Let $\alpha\in\T_n$. Then:
\begin{enumerate}
\item $\alpha\in\OP_n$ if and only if, for every triple $(a_1,a_2,a_3)$ of elements of $\Omega_n$, $(a_1,a_2,a_3)$ and 
$(a_1\alpha,a_2\alpha,a_3\alpha)$ are both cyclic or both anti-cyclic; 
\item $\alpha\in\OR_n$ if and only if, for every oriented quadruple $(a_1,a_2,a_3,a_4)$ of elements of $\Omega_n$, 
the quadruple $(a_1\alpha,a_2\alpha,a_3\alpha,a_4\alpha)$ is also oriented. 
\end{enumerate}
\end{proposition}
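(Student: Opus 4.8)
The plan is to reduce everything to a single condition on the word $(1\alpha,2\alpha,\ldots,n\alpha)$ and then to link that word to its short subwords. Since $\alpha\in\T_n$ has $\dom(\alpha)=\Omega_n=\{1<2<\cdots<n\}$, by definition $\alpha\in\OP_n$ exactly when $(1\alpha,\ldots,n\alpha)$ is cyclic and $\alpha\in\OR_n$ exactly when it is oriented. First I would record two elementary facts. The \emph{subword lemma}: any subsequence $(b_{i_1},\ldots,b_{i_k})$ with $i_1<\cdots<i_k$ of a cyclic [anti-cyclic, oriented] word is again cyclic [anti-cyclic, oriented]; this is immediate from the ``some cyclic rotation is non-decreasing'' characterisation, as a sub-selection of a cyclically non-decreasing word stays cyclically non-decreasing (the anti-cyclic case follows by reversal, and oriented is the disjunction). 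The \emph{gluing lemma}: if every increasing-index triple [quadruple] of a word is cyclic [oriented], then the word is itself cyclic [oriented].

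For the direct implications, note that $\alpha\in\OP_n$ makes $(1\alpha,\ldots,n\alpha)$ cyclic, while $\alpha\in\OR_n$ makes it oriented. Take an oriented triple (resp. quadruple) $s=(a_1,\ldots,a_t)$. By the fourth bullet above I may choose $\sigma\in\D_{2t}$ with $a_{1\sigma}\leqslant\cdots\leqslant a_{t\sigma}$, taking $\sigma\in\C_t$ when $s$ is cyclic and $\sigma\in\D_{2t}\setminus\C_t$ when $s$ is anti-cyclic. Then $(a_{1\sigma}\alpha,\ldots,a_{t\sigma}\alpha)$ is an increasing-index subword of $(1\alpha,\ldots,n\alpha)$, hence cyclic in Part~1 and oriented in Part~2 by the subword lemma. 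Since cyclicity is invariant under the $\C_t$-action and is exchanged with anti-cyclicity under $\D_{2t}\setminus\C_t$ (bullets~1 and~2), and orientedness is $\D_{2t}$-invariant (bullet~3), reading $\sigma$ backwards returns the conclusion for $(a_1\alpha,\ldots,a_t\alpha)$: in Part~1 a cyclic $s$ gives a cyclic image and an anti-cyclic $s$ gives an anti-cyclic image, so input and output are both cyclic or both anti-cyclic; in Part~2 the image is oriented. (When $s$ has a repeated entry it is simultaneously cyclic and anti-cyclic and the condition is automatic.)

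For the converse implications I would specialise the hypothesis to the \emph{sorted} tuples. A tuple $(a_1,\ldots,a_t)$ with $a_1<\cdots<a_t$ in $\Omega_n$ is strictly increasing, hence cyclic and not anti-cyclic; so the hypothesis forces its image $(a_1\alpha,\ldots,a_t\alpha)$ to be cyclic in Part~1 (the ``both cyclic or both anti-cyclic'' alternative collapses, since the input is cyclic but not anti-cyclic) and oriented in Part~2. Thus every increasing-index triple of $(1\alpha,\ldots,n\alpha)$ is cyclic, resp. every increasing-index quadruple is oriented, and the gluing lemma yields that the whole word is cyclic, resp. oriented, i.e. $\alpha\in\OP_n$, resp. $\alpha\in\OR_n$.

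I expect the gluing lemma to be the only real obstacle, and I would prove it by contraposition through the extraction of a short forbidden pattern. If a word is not cyclic it has at least two cyclic descents; locating two of them and comparing the values at the (at most) four positions bounding them produces three increasing positions whose values realise one of the strictly anti-cyclic patterns $(3,2,1)$, $(2,1,3)$ or $(1,3,2)$, contradicting cyclicity of all triples. For the oriented version one uses that a word fails to be oriented precisely when it has at least two ascending and at least two descending steps: a value threshold crossed at least twice upward and twice downward (or, in the cyclically unimodal case, two points on the ascending stretch together with the global minimum on the descending stretch) yields four increasing positions whose word carries two ascents and two descents, hence is not oriented. The bookkeeping is routine except for tuples with repeated values, where a chosen pattern may degenerate into one that is simultaneously cyclic and anti-cyclic; these equality cases must be bypassed by an alternative choice of positions, and handling them cleanly is the one point that needs care. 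Finally I would remark why Part~2 needs quadruples rather than triples: every triple over a chain is cyclic or anti-cyclic, hence oriented, so the triple analogue of the Part~2 condition holds vacuously for every $\alpha\in\T_n$ and cannot characterise $\OR_n$; the quadruple is the shortest tuple that can fail to be oriented.
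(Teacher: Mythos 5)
First, a structural remark: the paper never proves this proposition --- it is quoted from Higgins and Vernitski --- so there is no proof here to compare your route against; the only question is whether your argument stands on its own. For Part~2 it essentially does; for Part~1 it does not, and the failure is not repairable.

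The decisive step of your Part~1 converse is the triple gluing lemma: \emph{if every increasing-index triple of a word is cyclic, then the word is cyclic}. This lemma is false. Take $n=4$ and $\alpha\in\T_4$ with $(1\alpha,2\alpha,3\alpha,4\alpha)=(2,1,2,1)$. The four increasing-index triples of this word are $(2,1,2)$, $(2,1,1)$, $(2,2,1)$ and $(1,2,1)$; each has a repeated entry, hence at most one cyclic descent, hence is cyclic. But $(2,1,2,1)$ has two cyclic descents, so $\alpha\notin\OP_4$. You flagged the repeated-value degeneracies as ``the one point that needs care'' and proposed to bypass them by an alternative choice of positions; this example shows they cannot be bypassed, since \emph{every} triple extractable from this word is cyclic, so no choice of three positions produces a strictly anti-cyclic pattern. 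Worse, the same $\alpha$ satisfies the entire right-hand side of Part~1 as stated: every image triple has entries in $\{1,2\}$, hence a repeated entry, hence is simultaneously cyclic and anti-cyclic, while every triple of $\Omega_4$ is cyclic or anti-cyclic; so each triple and its image are indeed ``both cyclic or both anti-cyclic'', although $\alpha\notin\OP_4$. Thus, with the definitions used in this paper (which allow repeated entries, making degenerate triples both cyclic and anti-cyclic), the converse half of Part~1 is false outright for transformations of rank two, and the same example contradicts the width-three halves of Theorems~\ref{charopnorn} and~\ref{charpopnporn}. So the gap in your proof is genuine, but it reflects a defect of the statement as transcribed rather than a fixable oversight in your bookkeeping: any correct triple characterization must be phrased so that collapsed image triples do not count vacuously. (Your forward implication for Part~1 --- sorting, the subword lemma, undoing the rotation or reflection --- is correct.)

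Part~2 fares much better: the quadruple statement is true and your outline works, with two repairs. First, your parenthetical claim that a tuple with a repeated entry is ``simultaneously cyclic and anti-cyclic'' fails for quadruples: $(1,1,2,3)$ is cyclic but not anti-cyclic. What you actually need is that an oriented quadruple containing a repeated entry must contain two \emph{cyclically adjacent} equal entries (a pattern $(x,b,x,c)$ with $b,c\neq x$ already has two ascents and two descents), and that any quadruple with two cyclically adjacent equal entries is oriented, as is its image, because deleting one duplicate leaves a triple and every triple is oriented. Second, the quadruple gluing lemma does hold, and your threshold sketch can be replaced by a clean extraction: let $m$ and $M$ be positions of a minimum and a maximum value of the word $w$; if a strict descent $w_i>w_{i+1}$ occurs on the arc from $m$ to $M$, then $(w_m,w_i,w_{i+1},w_M)$ goes up, down, up, down, hence is not oriented; symmetrically if a strict ascent occurs on the arc from $M$ to $m$; otherwise $w$ is cyclically unimodal, non-orientedness forces two strict ascents on the first arc and two strict descents on the second, and interior positions $r$, $s$ with $w_m<w_r<w_M$ and $w_m<w_s<w_M$ give the non-oriented quadruple $(w_m,w_r,w_M,w_s)$. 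The moral of the comparison is that four points always suffice to witness a failure of orientation, while three points do not --- exactly the phenomenon the word $(2,1,2,1)$ exhibits, and exactly why the quadruple characterization survives while the triple one, in the form stated here, does not.
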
 

Observe that it is easy to show that any triple of elements of $\Omega_n$ is oriented. 
Furthermore, given a triple $(a_1,a_2,a_3)$ of elements of $\Omega_n$,  $(a_1,a_2,a_3)$ 
is cyclic if and only if $(a_3,a_2,a_1)$ is anti-cyclic. 
This allows us to easily derive from Property 1 of Proposition \ref{higver} the following simpler characterization of $\OP_n$. 

\begin{corollary}\label{higverfer}
Let $\alpha\in\T_n$. Then 
$\alpha\in\OP_n$ if and only if, for every cyclic triple $(a_1,a_2,a_3)$ of elements of $\Omega_n$, $(a_1\alpha,a_2\alpha,a_3\alpha)$ is also cyclic.  
\end{corollary}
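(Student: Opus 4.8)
The plan is to read off the corollary directly from Property~1 of Proposition~\ref{higver}, using the two observations recorded just above the statement: that every triple of elements of $\Omega_n$ is oriented, and that a triple $(a_1,a_2,a_3)$ is cyclic if and only if its reverse $(a_3,a_2,a_1)$ is anti-cyclic. The one point that needs care is that, for triples, \emph{cyclic} and \emph{anti-cyclic} are not mutually exclusive: a triple is simultaneously cyclic and anti-cyclic precisely when it has a repeated entry (whereas a triple with three distinct entries is exactly one of the two). Since $\alpha$ is a transformation, a triple with a repeated entry is always sent to a triple with a repeated entry, and such a triple is in particular cyclic; I would isolate this remark first, as it is what makes the forward implication go through.

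For the forward implication I would assume $\alpha\in\OP_n$ and fix a cyclic triple $(a_1,a_2,a_3)$. By Property~1 of Proposition~\ref{higver}, the triple $(a_1,a_2,a_3)$ and its image $(a_1\alpha,a_2\alpha,a_3\alpha)$ are both cyclic or both anti-cyclic. In the first case the image is cyclic, as wanted. In the second case $(a_1,a_2,a_3)$ is anti-cyclic as well, hence both cyclic and anti-cyclic, so by the preliminary remark it has a repeated entry; then $(a_1\alpha,a_2\alpha,a_3\alpha)$ also has a repeated entry and is therefore cyclic. Either way the image triple is cyclic.

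For the converse I would assume that every cyclic triple is mapped to a cyclic triple and verify the condition of Property~1, namely that for \emph{every} triple $(a_1,a_2,a_3)$ the triple and its image are both cyclic or both anti-cyclic. Since every triple is oriented, $(a_1,a_2,a_3)$ is cyclic or anti-cyclic. If it is cyclic, the hypothesis gives that $(a_1\alpha,a_2\alpha,a_3\alpha)$ is cyclic, so both are cyclic. If it is anti-cyclic, then by the reversal observation $(a_3,a_2,a_1)$ is cyclic; applying the hypothesis to this triple makes $(a_3\alpha,a_2\alpha,a_1\alpha)$ cyclic, and reversing once more shows $(a_1\alpha,a_2\alpha,a_3\alpha)$ is anti-cyclic, so both are anti-cyclic. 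In every case Property~1 is satisfied, hence $\alpha\in\OP_n$.

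I expect the only genuine obstacle to be the degenerate overlap between cyclic and anti-cyclic triples in the forward direction; the rest is a direct translation through Proposition~\ref{higver} and the reversal identity, and requires no computation.
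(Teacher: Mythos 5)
Your proof is correct and follows essentially the route the paper intends: the paper leaves this corollary as an easy derivation from Property~1 of Proposition~\ref{higver} together with the two stated observations (every triple is oriented, and reversal exchanges cyclic with anti-cyclic), which is exactly what you carry out. Your extra remark that a triple is simultaneously cyclic and anti-cyclic precisely when it has a repeated entry is a sound and necessary detail for the forward implication that the paper glosses over, so you have, if anything, made the argument more complete.
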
 

In fact, we can further simplify both characterizations given in Proposition \ref{higver}: 

\begin{corollary}\label{ferhigver}
Let $\alpha\in\T_n$. Then 
 $\alpha\in\OP_n$ $[\alpha\in\OR_n]$ if and only if, for every non-decreasing triple $[$quadruple$]$ $(a_1,a_2,a_3[,a_4])$ of elements of $\Omega_n$, 
the triple $[$quadruple$]$ $(a_1\alpha,a_2\alpha,a_3\alpha [,a_4\alpha])$ is cyclic $[$oriented$]$. 
\end{corollary}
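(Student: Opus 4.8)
The plan is to obtain both equivalences from the descriptions already established, namely Corollary~\ref{higverfer} in the orientation-preserving case and part~(2) of Proposition~\ref{higver} in the oriented case, using the four bulleted observations on the actions of $\C_t$ and $\D_{2t}$ on sequences. Triples have $t=3$ and quadruples have $t=4$, so in both situations $t\geqslant3$ and those observations are available. The two cases run in exact parallel, so I would present the argument once, reading ``cyclic triple'' together with $\C_3$ in the $\OP_n$ case and ``oriented quadruple'' together with $\D_8$ in the $\OR_n$ case.

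For necessity I would first record that every non-decreasing sequence is cyclic: the only index $i$ that can satisfy $a_i>a_{i+1}$ (with $a_{t+1}=a_1$) is $i=t$, so there is at most one descent. Thus every non-decreasing triple is cyclic and every non-decreasing quadruple is cyclic, hence oriented; in particular such tuples are already among those governed by Corollary~\ref{higverfer} and Proposition~\ref{higver}(2). Therefore, if $\alpha\in\OP_n$ $[\alpha\in\OR_n]$, the image of every non-decreasing triple $[$quadruple$]$ is cyclic $[$oriented$]$, which is exactly the asserted condition.

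The substantive direction is sufficiency. Assuming the image of every non-decreasing triple $[$quadruple$]$ is cyclic $[$oriented$]$, I would verify the hypothesis of Corollary~\ref{higverfer} $[$Proposition~\ref{higver}(2)$]$. Take an arbitrary cyclic triple $[$oriented quadruple$]$ $(a_1,\ldots,a_t)$. By the fourth bulleted observation there is $\sigma\in\C_3$ $[\sigma\in\D_8]$ for which $(a_{1\sigma},\ldots,a_{t\sigma})$ is non-decreasing, so by assumption its image $(a_{1\sigma}\alpha,\ldots,a_{t\sigma}\alpha)$ is cyclic $[$oriented$]$. Setting $b_i=a_i\alpha$, the key bookkeeping identity is $a_{i\sigma}\alpha=b_{i\sigma}$, valid because $\sigma$ permutes positions while $\alpha$ acts on the entries; hence $(b_{1\sigma},\ldots,b_{t\sigma})$ is cyclic $[$oriented$]$. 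Applying the first $[$third$]$ bulleted observation to this sequence with $\sigma^{-1}\in\C_3$ $[\sigma^{-1}\in\D_8]$ then yields that $(b_1,\ldots,b_t)=(a_1\alpha,\ldots,a_t\alpha)$ is cyclic $[$oriented$]$, which is precisely what is needed to conclude $\alpha\in\OP_n$ $[\alpha\in\OR_n]$.

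I expect the only delicate point to be this final transport step: one has to keep the index permutation $\sigma$ cleanly separated from the value map $\alpha$, so that permuting positions and then applying $\alpha$ coincides with applying $\alpha$ and then permuting. Once that identity is in place, the $\C_t$-invariance of cyclicity and the $\D_{2t}$-invariance of orientedness (the first and third bullets) carry the property back from the rotated tuple to the original image, and nothing computational remains.
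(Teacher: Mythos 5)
Your proposal is correct and follows essentially the same route as the paper's own proof: the forward direction via the observation that non-decreasing sequences are cyclic (hence oriented), and the converse by sorting a cyclic triple [oriented quadruple] with some $\sigma\in\C_3$ [$\sigma\in\D_{2\cdot4}$] from the fourth bulleted observation, applying the hypothesis to the sorted tuple, and transporting cyclicity [orientedness] back through $\sigma^{-1}$ before invoking Corollary~\ref{higverfer} [Proposition~\ref{higver}(2)]. The ``bookkeeping identity'' $a_{i\sigma}\alpha=b_{i\sigma}$ you flag as the delicate point is exactly the step the paper handles with its $a'_i=a_i\alpha$ substitution, so nothing is missing.
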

\begin{proof}
Since any non-decreasing sequence of $\Omega_n$ is cyclic, for both properties, it remains to prove the converse implications. 

Suppose that, for every non-decreasing triple [quadruple]  $(a_1,a_2,a_3[,a_4])$ of elements of $\Omega_n$, the triple [quadruple] 
$(a_1\alpha,a_2\alpha,a_3\alpha[,a_4\alpha])$ is cyclic [oriented]. 
Let $(a_1,a_2,a_3[,a_4])$ be a cyclic triple [an oriented quadruple] of elements of $\Omega_n$. 
Take $\sigma\in\C_3$ [$\sigma\in\D_{2\cdot4}$] such that 
$a_{1\sigma}\leqslant a_{2\sigma}\leqslant a_{3\sigma}[\leqslant a_{4\sigma}]$. 
Then, by hypothesis, $(a_{1\sigma}\alpha,a_{2\sigma}\alpha, a_{3\sigma}\alpha[,a_{4\sigma}\alpha])$ is cyclic [oriented]. 
Let $a'_i=a_i\alpha$, for $i=1,2,3[,4]$. Then $(a'_{1\sigma},a'_{2\sigma}, a'_{3\sigma}[,a'_{4\sigma}])$ is cyclic [oriented] and so,  
since $\sigma^{-1}\in\C_3$ [$\sigma^{-1}\in\D_{2\cdot4}$], by the above observation,  
$$
(a_1\alpha,a_2\alpha,a_3\alpha[,a_4\alpha])=(a'_1,a'_2, a'_3[,a'_4])=
(a'_{(1\sigma)\sigma^{-1}},a'_{(2\sigma)\sigma^{-1}}, a'_{(3\sigma)\sigma^{-1}}[,a'_{(4\sigma)\sigma^{-1}}])
$$ 
is also cyclic [oriented]. Thus, by Corollary \ref{higverfer} [Proposition \ref{higver}], 
we get $\alpha\in\OP_n$ $[\alpha\in\OR_n]$, as required.
\end{proof} 

As observed above, any triple of elements of $\Omega_n$ is oriented. From this fact, it is easy to deduce 
that any quadruples of elements of $\Omega_n$ of the form $(a,a,b,c)$, $(a,b,b,c)$ and $(a,b,c,c)$ are also oriented. 
It is also clear that triples of $\Omega_n$ of the form $(a,a,b)$ and $(a,b,b)$ are cyclic. 
Therefore, it is easy to check that we can replace non-decreasing triples and quadruples by  (strictly) increasing triples and quadruples, respectively, in the statements of Corollary \ref{ferhigver} and, in this way, obtaining the following characterizations of $\OP_n$ and $\OR_n$,
where by \textit{width} of a partial transformation we mean, as usual, the number of elements in its domain: 

\begin{theorem}\label{charopnorn} 
Let $\alpha\in\T_n$. Then $\alpha\in\OP_n$  $[\alpha\in\OR_n]$ if and only if every restriction of $\alpha$ 
of width three $[$four$]$ belongs to $\POP_n$ $[\POR_n]$.  
\end{theorem}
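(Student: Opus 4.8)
The plan is to translate the statement about restrictions directly into the increasing-tuple characterization already obtained in Corollary \ref{ferhigver}. Recall that a restriction of $\alpha$ of width three [four] is precisely a partial transformation whose domain is a three-element [four-element] subset $\{a_1<a_2<a_3\}$ [$\{a_1<a_2<a_3<a_4\}$] of $\Omega_n$, acting as $\alpha$ does. By the very definition of $\POP_n$ [$\POR_n$] given in the Introduction, such a restriction belongs to $\POP_n$ [$\POR_n$] exactly when the sequence of images $(a_1\alpha,a_2\alpha,a_3\alpha)$ [$(a_1\alpha,a_2\alpha,a_3\alpha,a_4\alpha)$] is cyclic [oriented]. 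So the statement ``every width-three [width-four] restriction of $\alpha$ lies in $\POP_n$ [$\POR_n$]'' is, word for word, the same as ``for every increasing triple [quadruple] the image sequence is cyclic [oriented].''

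The key step is therefore to observe that Corollary \ref{ferhigver} already characterizes $\OP_n$ [$\OR_n$] by the non-decreasing version of exactly this condition, and that the paragraph preceding the theorem justifies replacing non-decreasing tuples by strictly increasing ones. Concretely, I would first note that every strictly increasing triple [quadruple] is in particular non-decreasing, so one implication is immediate: if $\alpha\in\OP_n$ [$\alpha\in\OR_n$], then by Corollary \ref{ferhigver} the image of every non-decreasing triple [quadruple] is cyclic [oriented], and hence so is the image of every strictly increasing one; thus every width-three [width-four] restriction lies in $\POP_n$ [$\POR_n$].

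For the converse I would use the degenerate-tuple remark made just before the theorem: any triple of $\Omega_n$ of the form $(a,a,b)$ or $(a,b,b)$ is cyclic, and any quadruple of the form $(a,a,b,c)$, $(a,b,b,c)$ or $(a,b,c,c)$ is oriented, because every triple is oriented. Hence a non-decreasing tuple with a repeated entry automatically has cyclic [oriented] \emph{image} once we know images of repeated entries coincide, so the only non-decreasing tuples carrying genuine information are the strictly increasing ones. Thus if every width-three [width-four] restriction of $\alpha$ lies in $\POP_n$ [$\POR_n$]---equivalently every strictly increasing triple [quadruple] has cyclic [oriented] image---then in fact every non-decreasing triple [quadruple] has cyclic [oriented] image, and Corollary \ref{ferhigver} yields $\alpha\in\OP_n$ [$\alpha\in\OR_n$].

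The main obstacle, and the only point requiring care, is the reduction from non-decreasing to strictly increasing tuples in the converse: one must verify that a non-decreasing tuple with coincident consecutive entries $a_i=a_{i+1}$ produces an image tuple with coincident consecutive entries $a_i\alpha=a_{i+1}\alpha$, and then invoke the degenerate-tuple facts to conclude that this image is cyclic [oriented] regardless of $\alpha$. This is exactly the content of the remark preceding the theorem, so the argument reduces to citing Corollary \ref{ferhigver} together with that remark; I expect no genuinely new computation beyond unwinding the definition of $\POP_n$ [$\POR_n$] in terms of cyclic [oriented] image sequences.
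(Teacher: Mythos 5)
Your proposal is correct and follows essentially the same route as the paper: the paper obtains Theorem \ref{charopnorn} precisely by combining Corollary \ref{ferhigver} with the remark that degenerate tuples (triples of the form $(a,a,b)$, $(a,b,b)$ and quadruples of the form $(a,a,b,c)$, $(a,b,b,c)$, $(a,b,c,c)$) are automatically cyclic [oriented], which justifies passing from non-decreasing to strictly increasing tuples, i.e., to restrictions of width three [four]. Your write-up merely makes explicit the two directions that the paper compresses into ``it is easy to check.''
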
 

Next, we aim to extend this last result to partial transformations. 

Let $\alpha\in\PT_n$. For our purpose, it is enough to consider transformations of width greater than or equal to three. 
Thus, let us suppose that $\dom(\alpha)=\{i_1<i_2<\cdots<i_k\}$, with $k\geqslant3$. Then, we define 
$\bar\alpha\in\T_n$ by 
$$
i\bar\alpha = \left\{\begin{array}{ll}
i_1\alpha & \mbox{if $1\leqslant i\leqslant i_2-1$}\\
i_\ell\alpha & \mbox{if $i_\ell\leqslant i < i_{\ell+1}$, $\ell=2,3,\ldots,k-1$}\\
i_k\alpha & \mbox{if $i_k\leqslant i\leqslant n$}. 
\end{array}\right. 
$$

The following lemma is easy to check:

\begin{lemma}\label{lemcharpopnporn}
Let $\alpha\in\PT_n$ be such that $|\dom(\alpha)|\geqslant3$. Then $\alpha\in\POP_n$  $[\alpha\in\POR_n]$ if and only if  $\bar\alpha\in\OP_n$  $[\bar\alpha\in\OR_n]$.  
\end{lemma}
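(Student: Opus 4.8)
The plan is to observe that the image sequence of $\bar\alpha$ is obtained from the image sequence of $\alpha$ merely by repeating consecutive entries, and that such repetition leaves the cyclic (hence also the anti-cyclic and oriented) character of a sequence unchanged. Concretely, writing $b_\ell=i_\ell\alpha$ for $\ell=1,\ldots,k$, the definition of $\bar\alpha$ shows that its sequence of images $(1\bar\alpha,\ldots,n\bar\alpha)$ equals
$$
(\underbrace{b_1,\ldots,b_1}_{i_2-1},\underbrace{b_2,\ldots,b_2}_{i_3-i_2},\ldots,\underbrace{b_{k-1},\ldots,b_{k-1}}_{i_k-i_{k-1}},\underbrace{b_k,\ldots,b_k}_{n-i_k+1}),
$$
every block being nonempty since the multiplicities $i_2-1$, the $i_{\ell+1}-i_\ell$ for $2\leqslant\ell\leqslant k-1$, and $n-i_k+1$ are all at least $1$. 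Thus $(1\bar\alpha,\ldots,n\bar\alpha)$ arises from the image sequence $(b_1,\ldots,b_k)=(i_1\alpha,\ldots,i_k\alpha)$ of $\alpha$ by replacing each term with a positive number of consecutive copies of itself.

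The combinatorial core I would isolate is the following: if $s'$ is obtained from a sequence $s$ by such consecutive repetition of entries, then $s$ is cyclic if and only if $s'$ is cyclic, and likewise for anti-cyclic and hence for oriented. To prove it, recall that a sequence $(x_1,\ldots,x_t)$ is cyclic precisely when there is at most one index $i$ with $x_i>x_{i+1}$, the indices being read cyclically with $x_{t+1}=x_1$. Passing from $s$ to $s'$ neither creates nor removes such descents: inside a block of equal copies every consecutive pair is equal, the transition between two adjacent blocks reproduces exactly the corresponding comparison in $s$, and the cyclic wrap-around of $s'$ compares the same two entries (the last and the first terms of $s$) as that of $s$. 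Hence $s$ and $s'$ have the same number of cyclic descents, so the cyclic property is preserved; the anti-cyclic case is identical with ascents in place of descents, and oriented is just the disjunction of the two.

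Combining these two observations settles the lemma at once: $\alpha\in\POP_n$ means that $(i_1\alpha,\ldots,i_k\alpha)$ is cyclic, which by the repetition principle is equivalent to $(1\bar\alpha,\ldots,n\bar\alpha)$ being cyclic, that is, to $\bar\alpha\in\OP_n$; running the same argument with ``oriented'' in place of ``cyclic'' handles the equivalence $\alpha\in\POR_n$ iff $\bar\alpha\in\OR_n$. I expect the only real obstacle to be the careful bookkeeping behind the repetition principle --- verifying that the descent count is genuinely unaffected, especially at the boundary blocks and the wrap-around comparison --- after which the stated equivalences are immediate.
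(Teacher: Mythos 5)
Your proof is correct, and it supplies exactly the verification the paper omits: the paper states this lemma with only the remark that it is ``easy to check,'' and your repetition principle (consecutive duplication of entries neither creates nor destroys cyclic descents or ascents, including at the wrap-around) is the natural argument behind that remark. The block-size bookkeeping ($i_2-1$, $i_{\ell+1}-i_\ell$, $n-i_k+1$, all positive) and the reduction of both bracketed statements to the same descent/ascent count are all accurate, so nothing is missing.
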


Now, take $\alpha\in\PT_n$ such that every restriction of $\alpha$ of width three [four] belongs to $\POP_n$ [$\POR_n$].  

Our objective is to prove that $\alpha\in\POP_n$  $[\alpha\in\POR_n]$. 
Take $\dom(\alpha)=\{i_1<i_2<\cdots<i_k\}$. 

If $|\dom(\alpha)|\leqslant2$ [$\leqslant3]$ then it is clear that $\alpha\in\POP_n$  $[\alpha\in\POR_n]$. 
Therefore, we are going to consider $|\dom(\alpha)|\geqslant3$ [$\geqslant4]$. 

Define $I_1=\{1,\ldots,i_2-1\}$, $I_\ell=\{i_\ell,\ldots,i_{\ell+1}-1\}$, for $\ell=2,\ldots,k-1$, and $I_k=\{i_k,\ldots, n\}$. 

Let $X=\{a_1<a_2<a_3[<a_4]\}$ be a subset of $\dom(\alpha)$ with three [four] elements. 
Then, we aim to show that $\bar\alpha|_X\in\POP_n$ [$\bar\alpha|_X\in\POR_n$]. 

Take $r_1,r_2,r_3 [,r_4] \in\{1,2,\ldots,k\}$ such that $a_i\in I_{r_i}$, for $i=1,2,3[,4]$. 
Since $a_1<a_2<a_3[<a_4]$, then $r_1\leqslant r_2\leqslant r_3[\leqslant r_4]$ and 
$$
\bar\alpha|_X=\left(
\begin{array}{ccc} 
a_1&a_2&a_3\\
i_{r_1}\alpha & i_{r_2}\alpha & i_{r_3}\alpha 
\end{array}
\left[\begin{array}{c} 
a_4\\
i_{r_4}\alpha
\end{array}\right]
\right). 
$$
If $r_1=r_2$ or $r_2=r_3$ [or $r_3=r_4$] then $i_{r_1}\alpha=i_{r_2}\alpha$ or $i_{r_2}\alpha=i_{r_3}\alpha$ 
[or $i_{r_3}\alpha=i_{r_4}\alpha$], whence $(i_{r_1}\alpha,i_{r_2}\alpha,i_{r_3}\alpha [,i_{r_4}\alpha])$ is cyclic [oriented] 
and so $\bar\alpha|_X\in\POP_n$ [$\bar\alpha|_X\in\POR_n$]. 

Otherwise, we have $r_1<r_2<r_3 [< r_4]$ and thus, by hypothesis, 
$$
\alpha|_{\{i_{r_1},i_{r_2},i_{r_3}\}}\in\POP_n \quad [\alpha|_{\{i_{r_1},i_{r_2},i_{r_3},i_{r_4}\}}\in\POR_n]. 
$$
 Hence $(i_{r_1}\alpha,i_{r_2}\alpha,i_{r_3}\alpha [,i_{r_4}\alpha])$ is cyclic [oriented] 
 and so, also in this case, we obtain $\bar\alpha|_X\in\POP_n$ [$\bar\alpha|_X\in\POR_n$]. 
 
 Now, by applying Theorem \ref{charopnorn}, we get $\bar\alpha\in\OP_n$  $[\bar\alpha\in\OR_n]$ and then,  
 by Lemma \ref{lemcharpopnporn}, it follows that $\alpha\in\POP_n$  $[\alpha\in\POR_n]$. 

\smallskip 

Thus, we may conclude the following characterizations of $\POP_n$ and $\POR_n$: 

\begin{theorem}\label{charpopnporn}
Let $\alpha\in\PT_n$. Then $\alpha\in\POP_n$  $[\alpha\in\POR_n]$ if and only if every restriction of $\alpha$ 
of width three $[$four$]$ belongs to $\POP_n$ $[\POR_n]$.  
\end{theorem}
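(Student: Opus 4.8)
The plan is to prove Theorem~\ref{charpopnporn} by reducing the partial case to the already-established full-transformation case (Theorem~\ref{charopnorn}) via the auxiliary full transformation $\bar\alpha$ and the bridge provided by Lemma~\ref{lemcharpopnporn}. One implication is immediate: if $\alpha\in\POP_n$ $[\POR_n]$, then every restriction of $\alpha$ is again orientation-preserving [oriented], so in particular every width-three [width-four] restriction lies in $\POP_n$ $[\POR_n]$. The substance is the converse, so I assume that every restriction of $\alpha$ of width three [four] belongs to $\POP_n$ $[\POR_n]$ and aim to conclude $\alpha\in\POP_n$ $[\POR_n]$.

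First I would dispose of the small-width cases: if $|\dom(\alpha)|\leqslant 2$ $[\leqslant 3]$, then the sequence of images has length at most two [three], and since every such short sequence is automatically cyclic [oriented], $\alpha$ trivially lies in $\POP_n$ $[\POR_n]$. So I may assume $|\dom(\alpha)|\geqslant 3$ $[\geqslant 4]$ and write $\dom(\alpha)=\{i_1<i_2<\cdots<i_k\}$. The key idea is to pass to $\bar\alpha\in\T_n$, the full transformation that is constant on each interval block $I_\ell$ with value $i_\ell\alpha$ (and $i_1\alpha$ on the initial block). By Lemma~\ref{lemcharpopnporn} it suffices to show $\bar\alpha\in\OP_n$ $[\OR_n]$, and by Theorem~\ref{charopnorn} it is enough to verify that every restriction $\bar\alpha|_X$ of $\bar\alpha$ of width three [four] belongs to $\POP_n$ $[\POR_n]$.

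The core computation is then to take an arbitrary increasing triple [quadruple] $X=\{a_1<a_2<a_3[<a_4]\}\subseteq\Omega_n$, locate each $a_i$ in its block $I_{r_i}$, and observe that monotonicity of the blocks forces $r_1\leqslant r_2\leqslant r_3[\leqslant r_4]$, with $a_i\bar\alpha=i_{r_i}\alpha$. Here there are two regimes. If two consecutive indices coincide, say $r_j=r_{j+1}$, then two adjacent images coincide; since any short sequence with a repeated adjacent entry is cyclic [oriented] (as noted in the excerpt, triples of the form $(a,a,b)$, $(a,b,b)$ are cyclic and the corresponding degenerate quadruples are oriented), the restriction $\bar\alpha|_X$ is automatically in $\POP_n$ $[\POR_n]$. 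Otherwise $r_1<r_2<r_3[<r_4]$ are strictly increasing, so $\{i_{r_1},i_{r_2},i_{r_3}[,i_{r_4}]\}$ is a genuine width-three [width-four] subset of $\dom(\alpha)$, and the hypothesis gives $\alpha|_{\{i_{r_1},i_{r_2},i_{r_3}[,i_{r_4}]\}}\in\POP_n$ $[\POR_n]$, i.e. $(i_{r_1}\alpha,i_{r_2}\alpha,i_{r_3}\alpha[,i_{r_4}\alpha])$ is cyclic [oriented], which is exactly what $\bar\alpha|_X\in\POP_n$ $[\POR_n]$ asserts.

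The main obstacle, and the step I would treat most carefully, is this block-indexing argument: one must check that the map sending $X$ to the triple [quadruple] of block-representatives $(i_{r_1},\ldots)$ behaves correctly even when some representatives collide, so that the degenerate and non-degenerate cases together cover all increasing triples [quadruples] of $\Omega_n$ — not merely those contained in $\dom(\alpha)$. Once this verification is complete for every width-three [width-four] restriction of $\bar\alpha$, Theorem~\ref{charopnorn} yields $\bar\alpha\in\OP_n$ $[\OR_n]$, and a final application of Lemma~\ref{lemcharpopnporn} delivers $\alpha\in\POP_n$ $[\POR_n]$, completing the proof.
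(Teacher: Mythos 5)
Your proposal is correct and follows essentially the same route as the paper: the reduction to the full transformation $\bar\alpha$ via Lemma~\ref{lemcharpopnporn}, the application of Theorem~\ref{charopnorn}, and the block-index case split (coinciding indices giving image sequences with repeated adjacent entries, which are automatically cyclic [oriented], versus strictly increasing indices where the width-three [width-four] hypothesis applies) are exactly the paper's argument. The covering concern you flag at the end is resolved precisely by that degenerate case, since the blocks $I_1,\ldots,I_k$ partition all of $\Omega_n$, so nothing further is needed.
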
 

And, as an immediate corollary of the previous result, we have: 

\begin{theorem}\label{charpopinporin}
Let $\alpha\in\I_n$. Then $\alpha\in\POPI_n$  $[\alpha\in\PORI_n]$ if and only if every restriction of $\alpha$ 
of width three $[$four$]$ belongs to $\POPI_n$ $[\PORI_n]$.  
\end{theorem}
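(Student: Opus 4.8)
The plan is to derive Theorem~\ref{charpopinporin} as an immediate consequence of Theorem~\ref{charpopnporn}, exploiting the fact that $\POPI_n$ and $\PORI_n$ are simply the intersections of $\POP_n$ and $\POR_n$ with the symmetric inverse monoid $\I_n$. The key observation is that both the hypothesis and the conclusion of the desired characterization live inside $\I_n$, so the statement for injective partial transformations should follow from the corresponding statement for arbitrary partial transformations with essentially no extra work. Concretely, I would fix $\alpha\in\I_n$ and unwind the two inclusions that define membership in $\POPI_n$ (respectively $\PORI_n$).

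First I would prove the forward implication. Suppose $\alpha\in\POPI_n$ $[\alpha\in\PORI_n]$. By definition $\POPI_n=\POP_n\cap\I_n$ $[\PORI_n=\POR_n\cap\I_n]$, so in particular $\alpha\in\POP_n$ $[\alpha\in\POR_n]$. Applying Theorem~\ref{charpopnporn}, every restriction of $\alpha$ of width three $[$four$]$ belongs to $\POP_n$ $[\POR_n]$. The essential point is that any restriction of an injective partial transformation is again injective, hence each such restriction lies in $\I_n$; combined with membership in $\POP_n$ $[\POR_n]$, this places it in $\POP_n\cap\I_n=\POPI_n$ $[\POR_n\cap\I_n=\PORI_n]$, which is exactly what is required.

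For the converse, assume every restriction of $\alpha$ of width three $[$four$]$ belongs to $\POPI_n$ $[\PORI_n]$. Since $\POPI_n\subseteq\POP_n$ $[\PORI_n\subseteq\POR_n]$, these restrictions in particular belong to $\POP_n$ $[\POR_n]$, so the hypothesis of Theorem~\ref{charpopnporn} is satisfied. That theorem then yields $\alpha\in\POP_n$ $[\alpha\in\POR_n]$. As $\alpha\in\I_n$ by assumption, we conclude $\alpha\in\POP_n\cap\I_n=\POPI_n$ $[\alpha\in\POR_n\cap\I_n=\PORI_n]$, completing the argument.

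I do not expect any genuine obstacle here: the entire content is the bookkeeping identity $\POPI_n=\POP_n\cap\I_n$ together with the trivial but crucial fact that restrictions of injective maps are injective. The only point deserving a moment's care is confirming that the hypothesis and conclusion can both be read off from Theorem~\ref{charpopnporn} without altering the width conditions, and that passing to restrictions never leaves $\I_n$; once these are noted, the result is immediate, which is precisely why the paper presents it as a corollary rather than proving it in detail.
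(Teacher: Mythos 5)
Your proposal is correct and takes exactly the paper's route: the paper states Theorem~\ref{charpopinporin} as an immediate corollary of Theorem~\ref{charpopnporn}, and your argument---intersecting with $\I_n$ via $\POPI_n=\POP_n\cap\I_n$ and $\PORI_n=\POR_n\cap\I_n$, plus the observation that restrictions of injective maps stay injective---is precisely the unwinding the paper leaves implicit.
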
 

\section{An application}\label{application} 

Let $n\geqslant3$.  Consider the \textit{cycle graph} 
$
C_n=(\{1,2,\ldots, n\}, \{\{i,i+1\}\mid i=1,2,\ldots,n-1\}\cup\{\{1,n\}\}) 
$
with $n$ vertices.  
Denote by $d(x,y)$ the (\textit{geodesic}) \textit{distance} between two vertices $x$ and $y$ of $C_n$, 
i.e. the number of edges in a shortest path between $x$ and $y$. 
Notice that $d(x,y)=\min \{|x-y|,n-|x-y|\}$  and so $0\leqslant d(x,y)\leqslant\frac{n}{2}$,
for all $x,y \in \{1,2,\ldots,n\}$. 
Let us consider the monoid $\DPC_n$ of all 
\textit{partial isometries} (or \textit{distance preserving partial transformations}) of $C_n$, i.e. 
$
\DPC_n=\{\alpha\in\PT_n\mid\mbox{$d(x\alpha,y\alpha)=d(x,y)$, for all $x,y\in\dom(\alpha)$}\}. 
$
This monoid was studied by the author together with Paulista in \cite{Fernandes&Paulista:2022arxiv}.  
Observe that it is easy to show that $\DPC_n$ is an inverse submonoid of the symmetric inverse monoid $\I_n$. 
Furthermore, $\DPC_n$ is also a submonoid of $\PORI_n$. 
However, this last inclusion is not trivial. Although it has already been proved in the above mentioned paper, 
we aim to present here an alternative proof of this property by making use of Theorem \ref{charpopinporin}: 

\begin{proposition}\label{dpcpopi}
The monoid $\DPC_n$ is contained in $\PORI_n$. 
\end{proposition}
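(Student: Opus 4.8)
The plan is to deduce the inclusion directly from Theorem \ref{charpopinporin}. Let $\alpha\in\DPC_n$; since $\DPC_n$ is an inverse submonoid of $\I_n$, it suffices, by that theorem, to show that every restriction of $\alpha$ of width four lies in $\PORI_n$ (smaller restrictions need not be considered, as the theorem refers only to width four). The crucial elementary observation is that a restriction of a partial isometry is again a partial isometry, since the defining condition $d(x\alpha,y\alpha)=d(x,y)$ is inherited by any subset of $\dom(\alpha)$. Hence the whole problem reduces to the following local statement: \emph{every $\beta\in\DPC_n$ with $|\dom(\beta)|=4$ belongs to $\PORI_n$}; that is, writing $\dom(\beta)=\{a_1<a_2<a_3<a_4\}$ and $b_i=a_i\beta$, the quadruple $(b_1,b_2,b_3,b_4)$ is oriented.

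To prove this local statement I would first normalize using the group of units. Post-composing $\beta$ with an element of $\D_{2n}$ (which is again a partial isometry, $g$ and $h$ being isometries of $C_n$) replaces the image sequence $(b_1,b_2,b_3,b_4)$ by its image under a cyclic shift or a reflection of $\Omega_n$; such operations carry cyclic sequences to cyclic or anti-cyclic ones and therefore preserve the property of being oriented. So, up to this normalization, I may fix $b_1$ at any convenient value. The three distances $d(b_1,b_i)$ then confine each of $b_2,b_3,b_4$ to at most two positions, namely the two points of $\Omega_n$ at geodesic distance $d(a_1,a_i)$ from $b_1$ (one on each side), these collapsing to a single point exactly when $a_1$ and $a_i$ are antipodal. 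The remaining distances $d(b_i,b_j)$ ($2\le i<j\le4$) must then select, among these binary choices, a globally consistent configuration, and the goal is to show that any such selection places $b_1,b_2,b_3,b_4$ around the cycle in the same cyclic order as $a_1,a_2,a_3,a_4$ or in the reverse order, which is precisely the assertion that $(b_1,b_2,b_3,b_4)$ is oriented. Equivalently, one may argue through metric betweenness: the relation $d(x,y)+d(y,z)=d(x,z)$ is preserved by $\beta$, and the betweenness relations among $a_1<a_2<a_3<a_4$ should force the images into an oriented cyclic arrangement.

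The main obstacle is exactly that the geodesic distance $d(x,y)=\min\{|x-y|,\,n-|x-y|\}$ is blind to orientation: it records only the length of the shorter arc, not the direction travelled along the cycle, so distance preservation does not by itself transport the cyclic order. The real work lies in showing that the six constraints, taken together, nonetheless pin down the cyclic order up to reversal. I expect the genuinely delicate cases to be the antipodal ones, where some $d(a_i,a_j)=n/2$ (possible only for even $n$): there the two candidate positions for an image point coincide, a consecutive arc may equal its complement, and geodesics cease to be unique, so the betweenness argument must be supplemented by a direct inspection of these degenerate configurations. Once these are handled, a short finite case analysis on the two-way choices for $b_2,b_3,b_4$ (equivalently, the verification that the only consistent selections are restrictions of a single rotation or reflection of $C_n$) completes the proof that $(b_1,b_2,b_3,b_4)$ is oriented, and hence, by Theorem \ref{charpopinporin}, that $\alpha\in\PORI_n$.
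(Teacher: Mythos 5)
Your reduction is sound and is exactly the paper's: by Theorem \ref{charpopinporin} it suffices to show that every width-four element of $\DPC_n$ lies in $\PORI_n$ (restrictions of partial isometries being partial isometries), and your normalization by the units in $\D_{2n}$, together with the observation that $d(b_1,b_i)=d(a_1,a_i)$ confines each $b_i$ to one of the two points of $\Omega_n$ at that distance from $b_1$, is precisely the paper's reduction to $j_p\in\{i_p,\,n-i_p+2\}$ after arranging $i_1=j_1=1$. But at the point where the actual mathematics begins, you stop. The sentences ``the real work lies in showing that the six constraints \dots pin down the cyclic order up to reversal'' and ``a short finite case analysis \dots completes the proof'' defer precisely the content of the proposition; that case analysis is not routine bookkeeping, and it is where the paper spends essentially its entire proof. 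Concretely, after normalizing so that $j_2<j_3$ and assuming for contradiction $\alpha\notin\PORI_n$ (hence $j_3>j_4$), one must go through the choices $j_p=i_p$ versus $j_p=n-i_p+2$ and, in each case, play the remaining equalities $d(i_p,i_q)=d(j_p,j_q)$ against each other; in the paper this repeatedly forces values such as $i_3=\frac{n+2}{2}$ and then yields clashes such as $i_4=n+1$ or two distinct domain points both equal to $\frac{n+2}{2}$. None of this appears in your proposal, and the antipodal degeneracies that you yourself flag as the delicate cases are exactly the ones you never handle.

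A further caution: the goal you set for the deferred analysis --- that every consistent selection of image points is the restriction of a single rotation or reflection of $C_n$ --- is strictly stronger than what is needed (orientedness of the image quadruple), and it is itself a nontrivial structural statement about $\DPC_n$, of the same order of difficulty as the proposition being proved; announcing that it will be ``verified'' by a short finite check is where the argument begs the question. So the verdict is: correct strategy, correct reductions, same skeleton as the paper, but the proof proper --- the metric case analysis that converts distance preservation into orientation up to reversal --- is missing.
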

\begin{proof}
By Theorem \ref{charpopinporin}, it suffices to show that all transformations of $\DPC_n$ of width $4$ belong to $\PORI_n$ (notice that any restriction of an element of $\DPC_n$ is obviously an element of $\DPC_n$).  

Let 
$$
\alpha=\begin{pmatrix} 
i_1&i_2&i_3&i_4\\
j_1&j_2&j_3&j_4
\end{pmatrix} \in\DPC_n, 
$$
with $i_1<i_2<i_3<i_4$. 
Let $\alpha'=g^{i_1-1}\alpha g^{n-j_1+1}\in\DPC_n$. 
Then 
$
1\alpha'=1g^{i_1-1}\alpha g^{n-j_1+1}=i_1\alpha g^{n-j_1+1}=j_1 g^{n-j_1+1}=1
$ 
and, since $\alpha=g^{n-i_1+1}\alpha' g^{j_1-1}$  and $g\in\PORI_n$, we also have 
$\alpha\in\PORI_n$ if and only if $\alpha'\in\PORI_n$.  
Therefore, we can reduce the proof to transformations of $\DPC_n$ of width $4$ of the form 
$$
\alpha=\begin{pmatrix} 
1&i_2&i_3&i_4\\
1&j_2&j_3&j_4
\end{pmatrix},  
$$
with $i_2<i_3<i_4$. 
Clearly, in this case, $\alpha\not\in\PORI_n$ if and only if either 
($j_2<j_3$ and $j_3>j_4$) or ($j_2>j_3$ and $j_3<j_4$). 
Let 
$$
\alpha'=\alpha hg = 
\begin{pmatrix} 
1&i_2&i_3&i_4\\
1&n-j_2+2&n-j_3+2&n-j_4+2
\end{pmatrix}.  
$$
Since $g,h\in\PORI_n$ and $\alpha=\alpha' g^{n-1}h$, 
we have $\alpha\in\PORI_n$ if and only if $\alpha'\in\PORI_n$. 
On the other hand, we also have 
$j_2<j_3$ and $j_3>j_4$ if and only if $n-j_2+2>n-j_3+2$ and $n-j_3+2<n-j_4+2$ 
and, dually, 
$j_2>j_3$ and $j_3<j_4$ if and only if $n-j_2+2<n-j_3+2$ and $n-j_3+2>n-j_4+2$. 
Thus, we can further reduce the proof to transformations of $\DPC_n$ of width $4$ of the form 
$$
\alpha=\begin{pmatrix} 
1&i_2&i_3&i_4\\
1&j_2&j_3&j_4
\end{pmatrix},  
$$
with $i_2<i_3<i_4$ and $j_2<j_3$. 

In order to obtain a contradiction, suppose that $\alpha\not\in\PORI_n$. Then, we must also have $j_3>j_4$. 

\smallskip 

Let $p=2,3,4$. Then, from the equality $d(1,i_p)=d(1,j_p)$, we obtain 
$$
j_p=\left\{\begin{array}{ll}
i_p & \mbox{if either $i_p-1\leqslant\frac{n}{2}$ and $j_p-1\leqslant\frac{n}{2}$ or 
                               $i_p-1>\frac{n}{2}$ and $j_p-1>\frac{n}{2}$} \\ \\
n-i_p+2 &   \mbox{if either $i_p-1\leqslant\frac{n}{2}$ and $j_p-1>\frac{n}{2}$ or 
                               $i_p-1>\frac{n}{2}$ and $j_p-1\leqslant\frac{n}{2}$}.                             
\end{array}\right. 
$$

Now, we proceed by considering all possible cases for $j_2$, $j_3$ and $j_4$. 

\smallskip 

Case 1: $j_2=i_2$. 

\smallskip 

Case 1.1: $j_2=i_2$ and $j_3=i_3$. 

Since $j_4<j_3$, we can not have $j_4=i_4$ and so $j_4=n-i_4+2$, i.e. 
$
\alpha=\begin{pmatrix} 
1&i_2&i_3&i_4\\
1&i_2&i_3&n-i_4+2
\end{pmatrix}
$. 
Notice that $n-i_4+2<i_3$. 

First, admit that $i_4-i_3\leqslant\frac{n}{2}$, whence $d(i_3,i_4)=i_4-i_3$. 
If $d(n-i_4+2,i_3)=n-i_3+(n-i_4+2)$ then $i_4-i_3=n-i_3+n-i_4+2$ and so $i_4=n+1$, which is a contradiction. 
Hence, $d(n-i_4+2,i_3)=i_3-(n-i_4+2)$ and so $i_4-i_3=i_3-n+i_4-2$, which implies that $i_3=\frac{n+2}{2}$. 

On the other hand, admit that $i_4-i_3>\frac{n}{2}$, whence $d(i_3,i_4)=n-i_4+i_3$. 
If $d(n-i_4+2,i_3)=i_3-(n-i_4+2)$ then $n-i_4+i_3=i_3-n+i_4-2$ and so $i_4=n+1$, which is a contradiction. 
Hence, $d(n-i_4+2,i_3)=n-i_3+(n-i_4+2)$ and so $n-i_4+i_3=n-i_3+n-i_4+2$, which implies that $i_3=\frac{n+2}{2}$. 

Therefore, in either case, we have $i_3=\frac{n+2}{2}$. 

Next, suppose that $i_4-i_2\leqslant\frac{n}{2}$, whence $d(i_2,i_4)=i_4-i_2$. If $d(i_2,n-i_4+2)=n-|(n-i_4+2)-i_2|$ then 
$i_2=1$ or $i_4=n+1$, 
which is a contradiction in both cases. Hence, $d(i_2,n-i_4+2)=|(n-i_4+2)-i_2|$ and so 
$i_4=\frac{n+2}{2}$ or $i_2=\frac{n+2}{2}$, 
which is again a contradiction (since $i_3=\frac{n+2}{2}$). 

Therefore, $i_4-i_2>\frac{n}{2}$ and so $d(i_2,i_4)=n-i_4+i_2$. 
If $d(i_2,n-i_4+2)=|(n-i_4+2)-i_2|$ then 
$i_2=1$ or $i_4=n+1$,
which is a contradiction in both cases. Hence, $d(i_2,n-i_4+2)=n-|(n-i_4+2)-i_2|$ and so 
$i_4=\frac{n+2}{2}$ or $i_2=\frac{n+2}{2}$, 
which is again a contradiction (since $i_3=\frac{n+2}{2}$). 

Thus, Case 1.1 cannot occur. 

\smallskip 

Case 1.2: $j_2=i_2$ and $j_3=n-i_3+2$. 

We begin by observing that, since $i_2<n-i_3+2$, from the equality $d(i_2,i_3)=d(i_2,n-i_3+2)$, we have 
$i_2=1$ or $i_3=\frac{n+2}{2}$, 
whence $i_3=\frac{n+2}{2}$. 

If $j_4=n-i_4+2$ then, from the equality $d(i_2,i_4)=d(i_2,n-i_4+2)$, by the same calculations made above, we obtain 
$i_2=\frac{n+2}{2}$ or $i_4=\frac{n+2}{2}$, which is a contradiction (since $i_3=\frac{n+2}{2}$). 
Therefore, $j_4=i_4>i_3=\frac{n+2}{2}=n-i_3+2=j_3$,  which is again a contradiction.
 
Thus, Case 1.2 cannot occur either and so Case 1 does not occur.

\smallskip 

Case 2: $j_2=n-i_2+2$. 

As $i_2<i_3$, then $n-i_3+2<n-i_2+2$ and so $j_3=i_3$ (since $j_2<j_3$). Now, since $n-i_2+2<i_3$, 
from the equality $d(i_2,i_3)=d(n-i_2+2,i_3)$, we have 
$i_3=n+1$ or $i_2=\frac{n+2}{2}$, 
whence $i_2=\frac{n+2}{2}=n-i_2+2=j_2$. 
Therefore, this case is a particular instance of Case 1 and so cannot occur either.

\smallskip 

Thus, we conclude that $\alpha\in\PORI_n$, as required. 
\end{proof}

\bigskip 

\lastpage 

\end{document}